\newtheorem{theorem}{Theorem}[section]
\newtheorem{corollary}[theorem]{Corollary}
\newtheorem{lemma}[theorem]{Lemma}
\theoremstyle{definition}
\newtheorem{definition}[theorem]{Definition}
\newtheorem{example}[theorem]{Example}
\newtheorem{remark}[theorem]{Remark}
\numberwithin{equation}{section}
\begin{document}
\makeatletter

\begin{center}
\large {\bf Best Approximate Solution for Generalized Nash Games and Quasi-optimization Problems}
\end{center}\vspace{5mm}

\begin{center}
	\renewcommand{\thefootnote}{\fnsymbol{footnote}}
	\textsc{Asrifa Sultana\footnote[1]{Corresponding author.  e-mail: asrifa@iitbhilai.ac.in}\footnote[2]{Department of Mathematics, Indian Institute of Technology Bhilai, Raipur - 492015, India.}, Shivani Valecha\footnotemark[2]}
\end{center}
\vspace{2mm}
\footnotesize{
\noindent\begin{minipage}{14cm}
{\bf Abstract:}
In this article, we consider generalized Nash games where the associated constraint map is not necessarily self. The classical Nash equilibrium may not exist for such games and therefore we introduce the notion of best approximate solution for such games. We investigate the occurrence of best approximate solutions for such generalized Nash games consisting of infinitely many players in which each player regulates the strategy variable lying in a topological vector space. Based on the maximum theorem and a fixed point result for Kakutani factorizable maps, we derive the existence of best approximate solutions under the quasi-concavity and weak continuity assumption on players' objective functions. Furthermore, we demonstrate the occurrence of best approximate solutions for quasi-optimization problems. 
\end{minipage}
 \\[5mm]

\noindent{\bf Keywords:} {generalized Nash equilibrium, fixed points, best approximate solution, non-self constraint map, quasi-optimization, topological vector space.}\\
\noindent{\bf Mathematics Subject Classification:} {49J40, 46N10, 90C26}

\hbox to14cm{\hrulefill}\par


\section{Introduction}\label{section1}
The well known Nash equilibrium problem (NEP) introduced by John F. Nash \cite{nash1}, was defined for finite number of players, each having a fixed set of strategies. The \textit{generalized Nash equilibrium problem} (GNEP) is a generalization of NEP in which the strategy set of any player is allowed to rely on the strategies selected by the rival players. The theory of GNEP was established by Debreu \cite{debreu1952} and further it was studied by Arrow and Debreu \cite{debreu} in connection with economic models. Since then the study of GNEP is gaining popularity due to its wide applicability in mathematical economy \cite{debreu}, electricity market \cite{ausselTDGNEP,aussel2016}, models related to environmental sustainability \cite{ramos} and many more. For the brief history of GNEP and the concise overview of recent advances in this field, the reader may refer to a survey paper by Facchinei and Kanzow \cite{faccheini} and work done by Fischer et al. \cite{fischer}. It can be observed that Debreu \cite{debreu1952} considered generalized Nash games for finitely many players with Euclidean spaces as strategy spaces. Later on, the generalized Nash games for finitely many players having infinite dimensional strategy spaces were studied by M. Lassonde \cite{lassonde}, Aussel et al. \cite{ausselTDGNEP} and others. 
However, Yannelis and Prabhakar \cite{yannels} and Tian and Zhou \cite{tian} considered the GNEP for infinitely many players having topological vector spaces as strategy space.

In a non-cooperative game, let us assume $P_i$ denotes the $i^{th}$ player in an (possibly) uncountable index set $I$. \color{black}Suppose the player $P_i$ regulates the strategy variable $x_i$ which lies in a subset $X_i$ of a locally convex \textit{topological vector space} (t.v.s.) $Y_i$ and $x_{-i}$ denotes the vector consisting of the strategies selected by rival players. Then $x=(x_{-i},x_i)\in X=\prod_{i\in I}X_i$ represents the whole vector of strategies. Let $u_i:X=\prod_{i\in I}X_i \rightarrow \mathbb{R}$ be the real valued objective/payoff function for the player $P_i$. Then for the given $x_{-i}$ (the strategies of rival players), the strategy set of a player $P_i$ is confined to a feasible strategy set $F_i(x_{-i})\subseteq X_i$ and the objective of the $i^{th}$ player is to select some suitable strategy $x_i\in F_i(x_{-i})$ such that $x_i$ solves the below stated problem,
$$ P_i(x_{-i}):\qquad\max_{z_i} \, u_i(x_{-i},z_i)\text{ subject to } z_i\in F_i(x_{-i}).$$
Suppose that $S_i(x_{-i})$ is the solution set of the above problem $P_i(x_{-i})$ for the given strategies $x_{-i}$ selected by the rival players. Then $\overline x=(\overline x_i)_{i\in I}\in X$ is an equilibrium point for the considered GNEP if $\overline x_i\in S_i(\overline x_{-i})$ for any $i\in I$. Moreover, the described GNEP reduces to NEP if for each $x_{-i}$, the set $F_i(x_{-i})$ equals to some fixed $C_i\subseteq X_i$ .


In \cite{tian}, Tian-Zhou studied the generalized Nash games for infinitely many players having topological vector spaces as strategy spaces. 
Based on an extended maximum theorem \cite[Theorem 1]{tian}, they established the below stated existence result for GNEP, by relaxing the lower semi-continuity of the defined constraint map and payoff functions:
\begin{theorem}
	\cite[Theorem 2]{tian}\label{theorem6}
	Suppose $X_i\neq \emptyset$ is a compact and convex subset of a locally convex Hausdorff t.v.s. for any $i$ in (possibly) uncountable index set $I$. If for each $i\in I$,
	\begin{itemize}
		\item[a)] $F_i:X_{-i}\rightarrow 2^{X_{i}}$ is non-empty, convex and compact valued upper semi-continuous map;
		\item[b)] $u_i:X_{-i}\times X_i \rightarrow \mathbb{R}$ is upper semi-continuous function which is quasi-concave in $x_{i}$;
		\item[c)]  $u_i$ is feasible path transfer lower semi-continuous function in $x_{-i}$ refer to $F_{i}$.
	\end{itemize} 
	Then the given generalized Nash game admits atleast one equilibrium point.
\end{theorem}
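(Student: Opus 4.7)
The plan is to reduce the existence of a generalized Nash equilibrium to a fixed point problem for the product of best-response correspondences. For each $i\in I$, define the best-response map
\[
S_i(x_{-i}) = \arg\max_{z_i\in F_i(x_{-i})}\, u_i(x_{-i},z_i),
\]
and then the combined correspondence $S:X\rightrightarrows X$ by $S(x) = \prod_{i\in I} S_i(x_{-i})$. An equilibrium $\overline x$ is, by definition, a fixed point of $S$, so the whole argument boils down to verifying that $S$ satisfies the hypotheses of a Kakutani-type fixed point theorem on $X$.

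The first step is to check that each $S_i(x_{-i})$ is non-empty, convex and compact. Non-emptiness follows from the fact that $u_i(x_{-i},\cdot)$ is upper semi-continuous on the non-empty compact set $F_i(x_{-i})$ and hence attains its maximum; convexity comes from the quasi-concavity of $u_i$ in $x_i$ together with the convexity of $F_i(x_{-i})$; and the argmax set is closed in the compact set $F_i(x_{-i})$, hence compact. The crucial — and most delicate — step is then to show that each $S_i$ is upper semi-continuous on $X_{-i}$. This is exactly where the extended maximum theorem (Theorem~1 of \cite{tian}) is invoked: rather than requiring continuity of $u_i$ and of $F_i$ in the classical Berge sense, the hypotheses (a)--(c) — namely upper semi-continuity and non-empty/convex/compact-valuedness of $F_i$, upper semi-continuity of $u_i$, and feasible-path-transfer lower semi-continuity of $u_i$ in $x_{-i}$ with respect to $F_i$ — are tailored precisely to yield upper semi-continuity of the marginal map $S_i$. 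I would simply verify that the hypotheses match and invoke the theorem in a black-box manner.

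With the $S_i$ in hand, I would assemble $S(x) = \prod_i S_i(x_{-i})$ on the product $X = \prod_{i\in I} X_i$. The product space is compact by Tychonoff's theorem and convex as a product of convex subsets of locally convex Hausdorff t.v.s., while $S$ inherits non-emptiness, convexity and compactness of values coordinatewise. Upper semi-continuity of $S$ on $X$ with the product topology follows from the upper semi-continuity of each $S_i$ because projections onto $X_{-i}$ are continuous, and because a correspondence into a product of compact Hausdorff spaces is upper semi-continuous if and only if each of its coordinate correspondences is. The Fan--Glicksberg extension of Kakutani's fixed point theorem to locally convex Hausdorff t.v.s. then yields some $\overline x\in X$ with $\overline x\in S(\overline x)$, i.e.\ $\overline x_i\in S_i(\overline x_{-i})$ for every $i\in I$, which is the desired equilibrium.

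The main obstacle in this argument is clearly the upper semi-continuity of $S_i$: neither $u_i$ nor $F_i$ is assumed to be lower semi-continuous, so the classical Berge maximum theorem is unavailable, and one must genuinely rely on the feasible-path-transfer lower semi-continuity assumption to recover enough regularity of the value function and argmax correspondence. A secondary technicality is ensuring that the infinite-product setting causes no trouble — specifically, that the fixed point theorem and the equivalence between upper semi-continuity of $S$ and of its coordinates go through for uncountable product indices — but this is standard once one is working in a locally convex Hausdorff space with the product topology.
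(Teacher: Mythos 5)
Your proposal is correct and follows essentially the same route as the paper: the paper does not prove this cited result directly but obtains it (Remark 4.2(i)) as the self-map special case of Theorem \ref{theorem3}, whose proof likewise builds the best-response maps $M_i$, invokes the Tian--Zhou maximum theorem for their upper semi-continuity, and passes to the product correspondence before applying a Kakutani-type fixed point theorem. The only difference is that you apply Fan--Glicksberg directly on the compact convex product, whereas the paper composes with the (here trivial) best approximation map and uses Lassonde's fixed point theorem for Kakutani factorizable maps; in the self-map setting these amount to the same thing.
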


On the other hand, Aussel et al. \cite{aussel2016} recently considered $n$-person generalized Nash game having non-self constraint map, that is, for given strategy $x_{-i}$ of rival players $F_i(x_{-i})$ is not necessarily subset of $X_i$ and the product map  $F=\prod_{i=1}^{n}F_i$ need not be self. In fact, they considered $F_i(x_{-i})\subseteq Y_i$ and $F_i(x_{-i})\cap X_i$ is possibly empty for any $i\in \{1,2,\cdots n\}$. It is worth mentioning that the classical Nash equilibrium may not exist for such games. They have provided an illustration of deregulated electricity market model which motivated them to introduce the notion of projected solution for such GNEP. The authors demonstrated the existence of projected solution for the $n$-person GNEP having Euclidean spaces as strategy spaces. Furthermore, Bueno and Cotrina \cite{cotrinaGNEP} established the existence of projected solutions for $n$-person generalized Nash games having non-self constraint maps defined over Banach spaces. 

Following Aussel et al. \cite{aussel2016} and Bueno-Cotrina \cite{cotrinaGNEP}, we consider generalized Nash games consisting of infinitely many players where the associated constraint map is not necessarily self and initiate a concept of best approximate solution for such games. Our primary objective is to investigate the existence of best approximate solutions for the generalized Nash games consisting of infinitely many players in which each player regulates a strategy variable lying in a locally convex t.v.s. 
We establish the existence of best approximate solutions for such games under the quasi-concavity and weak continuity assumption on players' objective functions. 
In particular, our result extends the above stated Theorem \ref{theorem6} to the case where the constraint map need not be self. Finally, we show the occurrence of best approximate solutions for quasi-optimization problems. 

This article is arranged in the following sequence. In section \ref{section2.1}, we introduce the concept of best approximate solution for GNEP. Section \ref{section3} consists of basic definitions, notations and preliminary results. Based on the generalized Berge's maximum theorem \cite{tian} and a fixed point result for Kakutani factorizable maps \cite{lassonde}, we prove the existence of best approximate solutions for generalized Nash equilibrium problems defined over infinite dimensional strategy spaces in section \ref{section4.1}. Furthermore, in section \ref{section4.2} we demonstrate the occurrence of best approximate solutions for quasi-optimization problems.

\section{Concept of Best Approximate Solution}\label{section2.1}
Now we introduce the concept of best approximate solution for generalized Nash equilibrium problem with non-self constraint map defined over infinite dimensional strategy spaces. For any $i$ in an uncountable index set $I$, suppose $X_i$ is a subset of a locally convex t.v.s. $Y_i$. Consider, 
$$ Y=\prod_{i\in I}Y_i,~X=\prod_{i\in I}X_i,~ Y_{-i}=\prod_{j\in I\setminus \{i\}} Y_j,~X_{-i}=\prod_{j\in I\setminus \{i\}} X_j,$$
where $X_{-i}$ denotes the cartesian product of sets other than $X_i$. In the classical definition of GNEP, it is assumed that for a given strategy vector $x_{-i}$ of rival players, $i^{th}$ player selects a strategy variable $x_i$ in $F_i(x_{-i})\subset X_i$. Let us consider a more general situation that the feasible strategy set $F_i(x_{-i})\subset Y_i$ and $F_i(x_{-i})\cap X_i$ is possibly empty for any $i\in I$ (see \cite{aussel2016,cotrinaGNEP}). Then, in this case the classical Nash equilibrium may or may not exists (refer to Example \ref{ex4}) and therefore we initiate the notion of best approximate solution for such GNEP.
\begin{definition}
	Let $I$ be an (possibly) uncountable index set of players. 
	For any $i\in I$, let $X_i$ be a subset of a locally convex t.v.s. $Y_i$ and $p_i$ be a continuous semi-norm defined over $Y_i$. 
	Suppose $u_i:Y \rightarrow \mathbb{R}$ and $F_i:X_{-i}\rightarrow 2^{Y_i}$ denotes objective function and feasible strategy map for an $i^{th}$ player. Then the strategy vector $\tilde x=(\tilde x_i)_{i\in I} \in X$ is called \textit{best approximate solution} for $GNEP(F_i,u_i)_{i\in I}$ iff there exists $\tilde y=(\tilde y_i)_{i\in I} \in Y$ satisfying,
	\begin{itemize}
		\item[(a)] $p_i(\tilde y_i-\tilde x_i)= \displaystyle \inf_{x_i\in X_i} p_i(\tilde y_i-x_i)$ for each $i\in I$.
		\item[(b)] $(\tilde y_i)_{i\in I}$ solves the following maximization problem for any $i\in I$, 
		$$ u_i(\tilde x_{-i}, \tilde y_i) \geq u_i(\tilde x_{-i}, z_i),~\text{for all}~ z_i\in F_i(\tilde x_{-i}).$$
	\end{itemize}
\end{definition}\color{black}
The reader may check that every generalized Nash equilibrium $\tilde x\in X$ is best approximate solution for the considered GNEP, as we may assume $\tilde y=\tilde x$ in above definition. 
On the other hand, every best approximate becomes a generalized Nash equilibrium for the considered game 
if $F=\prod_{i\in I} F_i$ is a self map. For better understanding the proposed concept of best approximate solution, we now present the generalized Nash game between two players.
\begin{example}\label{ex4}
	In a non-cooperative game, suppose players $P_1$ and $P_2$ control the strategy variable $\textbf x=(x_1,x_2),\textbf y=(y_1,y_2)$ in $\mathbb{R}^2$. Let $X$ and $Y$ be non-empty subsets of $\mathbb{R}^2$ defined as follows,
	$$X=Y=\{(x_1,x_2)\in \mathbb{R}^2|~ (x_1,x_2)\in[0,1]\times [0,1] ~\text{and}~x_1+x_2\geq 1\}.$$
	
	Let us define $F_1:Y\rightarrow 2^{\mathbb{R}^2}$ and $F_2:X\rightarrow 2^{\mathbb{R}^2}$ as the feasible strategy maps for the players $P_1$ and $P_2$ respectively,
	\begin{align*}
	&F_1(y_1,y_2)= \bigg\{\frac{2}{\sqrt{{y_1}^2+{y_2}^2}}(y_1,y_2)+(v_1,v_2)\bigg |~ (v_1,v_2)\in[0,1]\times [0,1]\bigg\}, \\
	&F_2(x_1,x_2)= \bigg\{\frac{\sqrt 2}{\sqrt{{x_1}^2+{x_2}^2}}(x_1,x_2)+(v_1,v_2)\bigg |~ (v_1,v_2)\in[0,1]\times [0,1]\bigg\}.
	\end{align*}
	
	 For players $P_1$ and $P_2$, let us define payoff functions $u_1:\mathbb{R}^2\times \mathbb{R}^2\rightarrow \mathbb{R}$ and $u_2:\mathbb{R}^2\times \mathbb{R}^2\rightarrow \mathbb{R}$ as follows:
	\begin{align*}
	&u_1(\textbf{x},\textbf{y})=2x_1+2x_2+3y_2,\\
	&u_2(\textbf{x},\textbf{y})=2x_1+y_1+y_2.
	\end{align*}
	It is easy to observe that the above defined $GNEP(F_i,u_i)_{i\in I}$ is the generalized Nash game with non-self constraint map due to the fact $F_1(Y)\cap X=\emptyset$. Consequently, $GNEP(F_i,u_i)_{i\in I}$ does not admit any (classical) equilibrium. However, we obtain $(\textbf x^{*},\textbf y^{*})=((1,1),(1,1))$ as a best approximate solution for given $GNEP(F_i,u_i)_{i\in I}$.
\end{example}
\begin{remark}
	The above definition is inspired from the definition of projected solution for GNEP which was introduced by Aussel et al. in finite dimensional setting (see \cite{aussel2016}).
\end{remark}

\section{Preliminaries}\label{section3}
\subsection{Basic Notations and Definitions}\label{section3.1}
Assume that $f:X\rightarrow\mathbb{R}$ is a real valued function where $X$ is a subset of some topological space. For the concept of upper and lower semi-continuity of $f$, the reader may refer \cite{berge,tian}. Suppose $U$ and $V$ are topological spaces and $K:U\rightarrow2^V$ is a set-valued map. The reader is referred to \cite{berge} for the notion of upper and lower semi-continuity of the map $K$. In context to \cite{tian}, we have stated the following concept which is weaker in comparison to lower semi-continuity of the correspondence $K$ and the real valued map $f$.
\begin{definition}\cite{tian}
	Assume that $U$ and $V$ are topological spaces and $K:U\rightarrow2^V$ is a set-valued map. A function $f:U \times V \rightarrow \mathbb{R}$ is known as ``feasible path transfer lower semi-continuous" (abbreviated as FPT l.s.c.) in $u$ refer to $K$ if for any point $(u,v) \in U \times V$ with $v \in K(u)$ and for any positive $\epsilon$, an open neighbourhood $\mathcal{O}(u)$ of $u$ can be obtained such that $\forall\, u' \in \mathcal{O}(u)$, $\exists\, v'\in K(u')$ satisfying,
	$$ f(u,v) < f(u',v')+ \epsilon.$$
\end{definition}
We can observe that if the maps $K$ and $f$ are lower semi-continuous, then $f$ is feasible path transfer lower semi-continuous refer to $K$. However, the example which is stated below disproves its converse.
\begin{example}
	Let  $f:[0,1]\times\mathbb{R}\rightarrow \mathbb{R}$ be a real valued function given as,
	$$f(u,v)= \begin{cases}
	1 &v=u \\
	0 &v\neq u.
	\end{cases}
	$$
	Suppose $K:[0,1]\rightarrow2^\mathbb{R}$ is a set-valued map given as follows,
	$$K(u)=\begin{cases}
	\mathbb{R} &u=0 \\
	\{u\}  &u\neq 0,
	\end{cases}$$
	then the map $f$ is not lower semi-continuous at $(0,0)$ and $K$ is not lower semi-continuous at the point $0$ but $f$ is FPT l.s.c. in variable $u$ refer to $K$ for any $u \in [0,1]$.
\end{example}

Using this weaker lower semi-continuity condition on $K$ and $f$, the following generalization of Berge's maximum theorem is derived by Tian and Zhou \cite{tian} to ensure that the maximizing maps of the players in a given generalized Nash game meets upper semi-continuity property.

\begin{theorem}\label{theorem1}\cite[Theorem 1]{tian}
For the given topological spaces $U$ and $V$, suppose the correspondence $K:U\rightarrow 2^V$ meets the upper semi-continuity property with $K(u)$ being non-empty compact set for each $u$. Let $f:U \times V \rightarrow \mathbb{R}$ be an upper semi-continuous function satisfying the condition of feasible path transfer lower semi-continuity for each $u\in U$ refer to $K$. Then the maximizing map $M:U\rightarrow 2^V$ given as,
	\begin{center}
		$M(u)=\{ v\in K(u): f(u,v)\geq f(u,w),\forall w\in K(u)\}$
	\end{center}
	meets upper semi-continuity property with $M(u)$ being non-empty compact for any $u\in U$.
\end{theorem}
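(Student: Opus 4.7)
The plan is to reduce the statement to Berge's classical maximum theorem by establishing continuity of the value function $m(u) := \sup_{v \in K(u)} f(u,v)$, with the FPT lower semi-continuity hypothesis acting as a surrogate for the usual lower semi-continuity of $K$ and $f$. First I would verify non-emptiness and compactness of $M(u)$: for fixed $u$, the section $f(u,\cdot)$ is upper semi-continuous on the non-empty compact set $K(u)$, so its maximum $m(u)$ is attained, giving $M(u)\neq\emptyset$; and $M(u) = \{v \in K(u) : f(u,v) \geq m(u)\}$ is a closed subset of a compact set, hence compact.

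Next I would establish that $m$ is continuous. Upper semi-continuity of $m$ follows from the standard argument using joint upper semi-continuity of $f$ together with upper semi-continuity and compact-valuedness of $K$. For lower semi-continuity at $u_0 \in U$, fix $\epsilon > 0$ and pick any $v_0 \in M(u_0) \subseteq K(u_0)$, so that $f(u_0,v_0) = m(u_0)$. Since $v_0 \in K(u_0)$, the FPT l.s.c. hypothesis applies at $(u_0, v_0)$ and yields a neighbourhood $\mathcal{O}(u_0)$ such that for each $u \in \mathcal{O}(u_0)$ there exists $v \in K(u)$ with
\[
f(u,v) > f(u_0, v_0) - \epsilon \;=\; m(u_0) - \epsilon.
\]
In particular $m(u) \geq f(u,v) > m(u_0) - \epsilon$ on $\mathcal{O}(u_0)$, which is exactly lower semi-continuity of $m$.

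With $m$ continuous, I would deduce upper semi-continuity of $M$ by a net argument. Take an open $G \supseteq M(u_0)$ and suppose for contradiction that there exist a net $u_\alpha \to u_0$ and $v_\alpha \in M(u_\alpha) \setminus G$. Because $K$ is upper semi-continuous and $K(u_0)$ is compact, a finite-subcover argument shows that if $(v_\alpha)$ had no cluster point in $K(u_0)$, one could build an open neighbourhood of $K(u_0)$ eventually avoided by the net, contradicting the fact that $v_\alpha \in K(u_\alpha)$ is eventually trapped in any open enlargement of $K(u_0)$. Hence there is a cluster point $v^* \in K(u_0)$ and a subnet $v_\alpha \to v^*$, along which upper semi-continuity of $f$ and continuity of $m$ give
\[
f(u_0, v^*) \;\geq\; \limsup_\alpha f(u_\alpha, v_\alpha) \;=\; \limsup_\alpha m(u_\alpha) \;=\; m(u_0),
\]
so $v^* \in M(u_0) \subseteq G$; but $v_\alpha \notin G$ with $G$ open contradicts $v_\alpha \to v^*$. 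The delicate point of the proof is the lower semi-continuity of $m$, since this is precisely where Berge's classical argument invokes lower semi-continuity of $K$; the FPT l.s.c. condition has been designed to deliver exactly this estimate directly, bypassing the need for separate lower semi-continuity of either $K$ or $f$. Everything else is a clean reduction to Berge-style techniques.
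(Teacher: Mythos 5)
This theorem is quoted in the paper from Tian--Zhou \cite[Theorem 1]{tian} without proof, so there is no in-paper argument to compare against; judged on its own, your proof is correct and is essentially the standard Berge-type argument that Tian and Zhou use. You invoke the FPT lower semi-continuity hypothesis in exactly the one place it is needed (lower semi-continuity of the value function $m$), and the cluster-point extraction for $v_\alpha\in K(u_\alpha)$ via the finite-subcover argument is the right way to handle nets in general topological spaces; the only stylistic difference from the original is that you route the final contradiction through continuity of $m$ rather than comparing $f(u_\alpha,v_\alpha)$ directly with a near-maximizer $w_\alpha\in K(u_\alpha)$, which is an equivalent bookkeeping choice.
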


Let us now recall the below stated generalization of the celebrated Kakutani fixed point result which will be used to validate the existence of best approximate solution for generalized Nash game in the sequel.

\begin{theorem}\label{theorem2}\cite[Theorem 4]{lassonde}
	Assume that $V$ is a locally convex Hausdorff t.v.s. Suppose $U_r\, (\text{for }r=1,2,\cdots N)$ and $U$ are non-empty convex subsets of $V$. Let $\Gamma:U\rightarrow 2^U$ be a map of the form $\Gamma=\Gamma_N \Gamma_{N-1}\cdots \Gamma_0$ or $\Gamma:U=U_0\xrightarrow {\Gamma_0} U_{1}\xrightarrow{\Gamma_1} U_2 \cdots U_N\xrightarrow{\Gamma_N} U_{N+1}=U$ where $\Gamma_r:U_r\rightarrow 2^{U_{r+1}}$ meets the upper semi-continuity property with non-empty, convex and compact values for each $r=0,1,2, \cdots N$. Then there exists $u\in U$ such that $u\in \Gamma(u)$ if $\overline{\Gamma(U)}$ is compact.
\end{theorem}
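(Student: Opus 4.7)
My plan is to reformulate the factorizable fixed point problem as an ordinary Kakutani-type problem on the product space $W=U_{0}\times U_{1}\times\cdots\times U_{N}$, and then invoke the classical Kakutani--Fan--Glicksberg theorem (or its Himmelberg extension when the domain is not compact).

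First I would define the cyclic-shift correspondence $\Phi:W\to 2^{W}$ by
\[\Phi(u_{0},u_{1},\ldots,u_{N})=\Gamma_{N}(u_{N})\times\Gamma_{0}(u_{0})\times\Gamma_{1}(u_{1})\times\cdots\times\Gamma_{N-1}(u_{N-1}).\]
A fixed point $(u_{0}^{*},\ldots,u_{N}^{*})\in\Phi(u_{0}^{*},\ldots,u_{N}^{*})$ immediately yields the chain $u_{1}^{*}\in\Gamma_{0}(u_{0}^{*})$, $u_{2}^{*}\in\Gamma_{1}(u_{1}^{*})$, $\ldots$, $u_{N}^{*}\in\Gamma_{N-1}(u_{N-1}^{*})$, together with $u_{0}^{*}\in\Gamma_{N}(u_{N}^{*})$; composing gives $u_{0}^{*}\in\Gamma(u_{0}^{*})$, so $u:=u_{0}^{*}\in U$ is the desired fixed point of $\Gamma$. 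I would then confirm that $\Phi$ inherits the required structural properties coordinatewise: a finite Cartesian product of non-empty convex compact sets is again non-empty, convex and compact, and a finite product of upper semi-continuous correspondences is upper semi-continuous in the product topology.

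The remaining, and I expect hardest, task is to trap $\Phi$ on a compact convex invariant subset of $W$ so that Kakutani--Fan--Glicksberg applies. Starting from any base point $u_{0}^{\circ}\in U$, I would build iteratively $K_{0}:=\overline{\operatorname{co}}(\{u_{0}^{\circ}\}\cup\overline{\Gamma(U)})$ and $K_{r+1}:=\overline{\operatorname{co}}(\Gamma_{r}(K_{r}))$ for $r=0,\ldots,N-1$; the hypothesis that $\overline{\Gamma(U)}$ is compact is precisely what absorbs $\Gamma_{N}(K_{N})$ back into $K_{0}$ and closes the cycle, so that $\Phi$ maps $K:=K_{0}\times\cdots\times K_{N}$ into itself. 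The delicate point is that in a general locally convex Hausdorff t.v.s.\ without quasi-completeness, the closed convex hull of a compact set need not be compact, so the $K_{r}$'s may fail to be compact. To bypass this I would fall back on Himmelberg's variant of Kakutani--Fan--Glicksberg, which only requires relative compactness of the range rather than compactness of the domain: iterating $\Phi$ enough times pushes every orbit into a product of compact sets obtained by successively applying the compact-valued maps $\Gamma_{r}$ to $\overline{\Gamma(U)}$, which yields the relative compactness of $\Phi(W)$ needed to conclude.
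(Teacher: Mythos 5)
First, a point of context: the paper does not prove this statement at all --- it is quoted as Theorem~4 of Lassonde's article on Kakutani factorizable multifunctions and used as a black box --- so there is no internal proof to compare against; your attempt has to be judged as a proof of Lassonde's theorem itself. Your reduction to the cyclic-shift correspondence $\Phi$ on $W=U_0\times\cdots\times U_N$ is correct as far as it goes (a fixed point of $\Phi$ does yield $u_0^*\in\Gamma(u_0^*)$, and $\Phi$ is u.s.c.\ with non-empty convex compact values), and it would finish the proof in the special case where every $U_r$ is compact. But that is not the hypothesis, and the compactness step is precisely where the content of the theorem lies.

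The gap is that the only compactness available is that of $\overline{\Gamma(U)}=\overline{\Gamma_N\cdots\Gamma_0(U_0)}$, the image of the \emph{full} composition; nothing whatsoever controls the intermediate images. Hence $\Phi(W)=\Gamma_N(U_N)\times\Gamma_0(U_0)\times\cdots\times\Gamma_{N-1}(U_{N-1})$ need not be relatively compact, and neither Kakutani--Fan--Glicksberg nor Himmelberg applies to $\Phi$. Both of your proposed repairs fail. For the trap $K_{r+1}=\overline{\operatorname{co}}(\Gamma_r(K_r))$: beyond the quasi-completeness issue you flag, $K_r$ need not stay inside $U_r$ (so $\Gamma_r(K_r)$ is not even defined), and the cycle does not close, since $\Gamma_N(K_N)\subseteq K_0$ would require $K_N$ to sit inside $\Gamma_{N-1}\cdots\Gamma_0(U)$, which the convex-hull closures destroy. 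For the iteration idea: the coordinate-$j$ range of $\Phi^{N+1}$ is the cyclic composition $\Gamma_{j-1}\cdots\Gamma_j(U_j)$, which for $j\neq 0$ is still uncontrolled because $U_1,\dots,U_N$ may be far larger than the corresponding images of $U_0$; moreover $\Phi^{N+1}$ splits as a product whose $0$-th factor is $\Gamma$ itself, whose values are not convex, so applying a Kakutani-type theorem to an iterate returns you to the original problem. Restricting all domains to the successive images does not help either: $\Gamma(U)$ is only relatively compact, its closure may leave $U$, and the image of a non-closed relatively compact set under a u.s.c.\ compact-valued map need not be relatively compact. This is why Lassonde's argument does not go through a product-space reduction: he first establishes the fixed point property for compositions of Kakutani maps on polytopes (via single-valued approximation and Brouwer) and then reaches the stated generality by a finite-dimensional approximation argument that exploits only the compactness of $\overline{\Gamma(U)}$. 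To salvage your approach you would have to add hypotheses (e.g.\ each $\overline{\Gamma_r(U_r)}$ compact, or each $U_r$ compact), which would no longer suffice for the application in Theorem~\ref{theorem3}, where only $\overline{F_i(X_{-i})}$ and the values of the projection maps are compact.
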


We would like to recall the concept of quasi-concavity for real valued functions. Assume that $U$ is a convex subset of some t.v.s. then $f:U\rightarrow \mathbb{R}$ is called quasi-concave function \cite{tian} if for any $u,v\in U,\, 0\leq t\leq 1$ we have,
$ f(tu+(1-t)v) \geq min\{f(u),f(v)\}.$ Note that the quasi-concave functions are preferable over concave functions. Indeed if we consider $f:\mathbb{R}\rightarrow \mathbb{R}$ defined as $f(x)=x^3$ then $f$ is quasi-concave but not concave.

\subsection{Concept related to Best Approximation Theory}\label{section3.2}
Let us recall some definitions and results related to the best approximation theory. Suppose $V$ is a locally convex Hausdorff t.v.s. with continuous semi-norm $p$. Assume that $U\subseteq V$ and for any $v\in V$, let us define $d_p(v,U)=\displaystyle \inf_{u\in U} p(v-u).$ Then the set $U$ called approximatively compact (refer to $p$) if for each $v\in V$ and every net $\{u_\alpha\}\in U$ satisfying $p(v-u_\alpha)\rightarrow d_p(v,U)$, there is a subnet $\{u_\beta\}$ of $\{u_\alpha\}$ converging to some point $u\in U$. It is easy to observe that any subset of $V$ is approximatively compact if it is compact. However, the converse implication may not hold (refer to Example \ref{ex3}). For better understanding let us consider some examples of approximatively compact sets.
\begin{example}\label{ex2}\cite{singh}
	In a uniformly convex Banach space $X$, any $C\subseteq X$ is approximatively compact with respect to norm defined over $X$ if it is closed and convex.
\end{example}
\begin{example}\label{ex3}\cite{singh}
	In infinite dimensional uniformly convex Banach space $X$, the unit ball $\overline B_1=\{x\in X|\,\norm{x}\leq 1\}$ is approximatively compact with respect to norm defined over $X$ but $\overline B_1$ is not compact in $X$.
\end{example}	

The following well known result related to best approximation map will be further used by us.
\begin{lemma}\label{lemma1}\cite{vetrivel}
	Suppose $V$ is a locally convex Hausdorff t.v.s. with a continuous semi-norm $p$. Let $U$ be an approximatively compact (refer to p) and convex subset of $V$. Consider a map $Pr:V\rightarrow 2^{U}$  defined as,
	$$Pr(v)= \{\bar u\in U|\, p(v-\bar u)=\displaystyle \inf_{u\in U} p(v-u)=d_p(v,U)\}.$$
	Then the map $Pr$ known as best approximation map meets the upper semi-continuity property with $Pr(v)$ being non-empty, compact and convex set for any $v \in V$.
\end{lemma}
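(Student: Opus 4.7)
The plan is to verify the four claims of the lemma in the natural order: (i) $Pr(v)$ is non-empty, (ii) $Pr(v)$ is convex, (iii) $Pr(v)$ is compact, and (iv) the map $Pr$ is upper semi-continuous. Throughout, the hypothesis of approximative compactness of $U$ (relative to $p$) does most of the work, while convexity of $U$ handles (ii).

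For (i), given $v \in V$, the definition of $d_p(v,U)$ as an infimum yields a net $\{u_\alpha\} \subseteq U$ with $p(v-u_\alpha) \to d_p(v,U)$. Approximative compactness of $U$ provides a subnet $\{u_\beta\}$ converging to some $u \in U$, and continuity of $p$ then gives $p(v-u) = d_p(v,U)$, hence $u \in Pr(v)$. For (ii), if $u_1, u_2 \in Pr(v)$ and $t \in [0,1]$, then $tu_1 + (1-t)u_2 \in U$ by convexity and
\[
p\bigl(v - (tu_1 + (1-t)u_2)\bigr) \leq t\,p(v-u_1) + (1-t)\,p(v-u_2) = d_p(v,U),
\]
so the reverse inequality $p(v-\cdot) \geq d_p(v,U)$ on $U$ forces equality. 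For (iii), any net in $Pr(v)$ already satisfies $p(v-u_\alpha) = d_p(v,U) \to d_p(v,U)$, so approximative compactness together with continuity of $p$ yields a subnet converging to a point of $Pr(v)$; this shows $Pr(v)$ is compact.

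The main obstacle is (iv). I would use the standard net characterization of upper semi-continuity for compact-valued maps into a Hausdorff space: it suffices to show that for every net $v_\alpha \to v$ in $V$ and every selection $u_\alpha \in Pr(v_\alpha)$, some subnet of $\{u_\alpha\}$ converges to a point of $Pr(v)$. The first auxiliary step is to observe that $v \mapsto d_p(v,U)$ is continuous on $V$, which follows from the triangle inequality $|d_p(v,U) - d_p(v',U)| \leq p(v-v')$ and continuity of $p$. The second step is to estimate
\[
d_p(v,U) \;\leq\; p(v-u_\alpha) \;\leq\; p(v-v_\alpha) + p(v_\alpha - u_\alpha) \;=\; p(v-v_\alpha) + d_p(v_\alpha, U),
\]
whose right-hand side tends to $d_p(v,U)$ by continuity of $p$ and of $d_p(\cdot,U)$. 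Hence $p(v-u_\alpha) \to d_p(v,U)$, and approximative compactness of $U$ relative to $v$ produces a subnet $\{u_\beta\}$ converging to some $u \in U$. Continuity of $p$ then gives $p(v-u) = d_p(v,U)$, i.e.\ $u \in Pr(v)$, which closes the argument.

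I anticipate (iv) to be the only nontrivial step; the subtlety is that $U$ is not assumed compact, so the standard "closed-graph into compact codomain" shortcut is unavailable, and one must exploit approximative compactness exactly where the naive compactness argument would fail. Once the key estimate above reduces the selection $\{u_\alpha\}$ to a minimizing net for $v$, the approximative-compactness hypothesis is tailor-made to extract the required convergent subnet.
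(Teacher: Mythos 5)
The paper does not prove this lemma; it is quoted from the reference of Bhattacharyya--Vetrivel and used as a black box, so there is no in-paper argument to compare against. Your proof is correct and is essentially the standard one for this result. Each step checks out: non-emptiness and compactness of $Pr(v)$ follow exactly as you say from applying approximative compactness to minimizing nets together with continuity of $p$ and of the vector operations; convexity uses subadditivity and absolute homogeneity of the semi-norm plus convexity of $U$; and the key estimate $p(v-u_\alpha)\le p(v-v_\alpha)+d_p(v_\alpha,U)\to d_p(v,U)$, combined with the $1$-Lipschitz property $\abs{d_p(v,U)-d_p(v',U)}\le p(v-v')$, correctly turns an arbitrary selection $u_\alpha\in Pr(v_\alpha)$ into a minimizing net for $v$. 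The only point worth making explicit is the net criterion you invoke for upper semi-continuity: the direction you need (if every selection along a convergent net admits a subnet converging into $Pr(v)$, then $Pr$ is u.s.c.\ at $v$) holds for arbitrary correspondences by a routine contradiction argument with the neighborhood net, so no hidden compactness of $U$ is required there --- your remark that approximative compactness substitutes for the missing compactness of $U$ is exactly the right diagnosis.
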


\section{Main results}\label{section4}

\subsection{Existence of Best Approximate Solutions for GNEP}\label{section4.1}

This section consists of the main result which ensures the existence of best approximate solution for $GNEP(F_i,u_i)_{i\in I}$ under some sufficient conditions on the objective functions $u_i$ and the feasible strategy maps $F_i$. Moreover, it is worth mentioning that in our proof generalized maximum theorem due to Tian and Zhou \cite{tian} and an important generalization of Kakutani fixed point theorem due to M. Lassonde \cite{lassonde} will play a significant role.

\begin{theorem}\label{theorem3}
	Assume that $I$ is an index set which is (possibly) uncountable. Suppose for $i\in I$, $Y_i$ is locally convex Hausdorff t.v.s. consisting a continuous semi-norm $p_i$ and $X_i\subseteq Y_i$ is non-empty, convex and approximatively compact (refer to $p_i$). Suppose for every $i\in I$,
	\begin{enumerate}
		\item[(a)] $u_i:Y_{-i}\times Y_i \rightarrow \mathbb{R}$ is upper semi-continuous real valued function which is quasi-concave in $y_{i}$;
		\item[(b)] $F_i:X_{-i}\rightarrow 2^{Y_{i}}$ is non-empty, closed and convex valued upper semi-continuous mapping where $\overline{F_i(X_{-i})}$ is compact;
		\item[(c)] $u_i$ is FPT l.s.c. in $x_{-i}$ refer to $F_{i}$.
	\end{enumerate}
	Then the generalized Nash game $GNEP(F_i,u_i)_{i\in I}$ admits atleast one best approximate solution. 
\end{theorem}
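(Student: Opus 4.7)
The plan is to reformulate the existence of a best approximate solution as a fixed point problem for a Kakutani-factorizable composition on the product strategy space, then invoke Theorem \ref{theorem2}. The two factors of the composition will be the player-wise best-reply (maximizer) correspondence on the $y$-side and the best-approximation projection on the $x$-side.

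First I would fix $i\in I$ and introduce the maximizer map $M_i:X_{-i}\to 2^{Y_i}$ defined by
\[
 M_i(x_{-i})=\{y_i\in F_i(x_{-i}):\ u_i(x_{-i},y_i)\geq u_i(x_{-i},z_i)\text{ for all }z_i\in F_i(x_{-i})\}.
\]
Since $F_i$ has closed values contained in the compact set $\overline{F_i(X_{-i})}$, each $F_i(x_{-i})$ is compact, so hypothesis (a)--(c) together with the generalized Berge maximum theorem (Theorem \ref{theorem1}) give that $M_i$ is upper semi-continuous with non-empty compact values. The quasi-concavity of $u_i$ in $y_i$, combined with the convexity of $F_i(x_{-i})$, will yield the convexity of $M_i(x_{-i})$ by the standard short argument. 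Next, via Lemma \ref{lemma1}, I define the best-approximation map $Pr_i:Y_i\to 2^{X_i}$ associated to the continuous semi-norm $p_i$ on the approximatively compact convex set $X_i$; it is upper semi-continuous with non-empty, convex and compact values.

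Now I would assemble these maps into product maps on the full product spaces. Put $Y=\prod_{i\in I}Y_i$, which is locally convex and Hausdorff, and $X=\prod_{i\in I}X_i$, which is convex in $Y$. Define
\[
 M:X\to 2^Y,\quad M(x)=\prod_{i\in I}M_i(x_{-i}),\qquad Pr:Y\to 2^X,\quad Pr(y)=\prod_{i\in I}Pr_i(y_i).
\]
Both $M$ and $Pr$ inherit non-empty, convex, compact values and upper semi-continuity from their factors (Tychonoff for compactness; the standard product criterion for u.s.c. of compact-valued multimaps in the product topology). This places us exactly in the setting of Theorem \ref{theorem2} with $V=Y$, the factorization
\[
 \Gamma:X=U_0\xrightarrow{\ M\ } U_1=Y\xrightarrow{\ Pr\ } U_2=X,
\]
and $\Gamma=Pr\circ M:X\to 2^X$. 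For the compactness hypothesis of Theorem \ref{theorem2}, observe that $M(X)\subseteq K:=\prod_{i\in I}\overline{F_i(X_{-i})}$, which is compact by Tychonoff, and upper semi-continuity of $Pr$ with compact values ensures $Pr(K)$ is compact; hence $\overline{\Gamma(X)}\subseteq Pr(K)$ is compact.

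Theorem \ref{theorem2} then yields $\tilde x\in X$ with $\tilde x\in Pr(M(\tilde x))$, so there exists $\tilde y\in M(\tilde x)$ such that $\tilde x\in Pr(\tilde y)$. Unwinding, $\tilde y_i\in M_i(\tilde x_{-i})$ realizes condition (b) of the definition of best approximate solution, and $\tilde x_i\in Pr_i(\tilde y_i)$ realizes condition (a), finishing the proof. The main obstacle I anticipate is the careful verification that the product correspondences $M$ and $Pr$ remain upper semi-continuous with convex compact values when $I$ is uncountable (so one must use the product-topology characterisation of u.s.c. rather than sequential arguments), and the checking that the compactness-of-image hypothesis of Theorem \ref{theorem2} can be secured from the assumption that each $\overline{F_i(X_{-i})}$ is compact rather than each $X_i$ being compact. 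All other steps are routine applications of Theorem \ref{theorem1}, Lemma \ref{lemma1}, and the quasi-concavity hypothesis.
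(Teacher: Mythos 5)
Your proposal is correct and follows essentially the same route as the paper's own proof: the same maximizer maps $M_i$ handled via Theorem \ref{theorem1}, convexity from quasi-concavity, the projection maps $Pr_i$ from Lemma \ref{lemma1}, the product maps (the paper cites Aliprantis--Border, Theorem 17.28, for upper semi-continuity of the products over an uncountable index set), the identical compactness argument $\overline{M(X)}\subseteq\prod_{i\in I}\overline{F_i(X_{-i})}$, and the fixed point of $\Gamma=Pr\circ M$ from Theorem \ref{theorem2}. No gaps to report.
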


\begin{proof}
	For any $i\in I$, we construct  $M_{i}:X_{-i}\rightarrow 2^{Y_{i}}$  as follows,
	\begin{center}
		$M_i(x_{-i})=\{y_i\in F_i(x_{-i})|\, u_i(x_{-i},y_{i})\geq u_i(x_{-i},z_{i})~\textrm{for~any}~ z_{i}\in F_i(x_{-i})\}$.
	\end{center}
	
	We can derive that $M_i(x_{-i})$ is convex. Taking any two elements $w_i,z_i\in M_i(x_{-i})\subset F_i(x_{-i})$ and $t \in  [0,1]$ arbitrary, it occurs $tw_i+(1-t)z_i \in F_i(x_{-i})$ as $F_i$ is convex-valued. Since $u_i$ is quasi-concave w.r.t. $y_i$, it appears
	\begin{align*}
	u_i(x_{-i},tw_i+(1-t)z_i) &\geq \min\{u_i(x_{-i},w_i),u_i(x_{-i},z_i)\}\\
	&\geq u_i(x_{-i},s_i),
	\end{align*}
	for any $s_i \in F_i(x_{-i})$. Hence, $tw_i+(1-t)z_i \in M_i(x_{-i})$ and $M_i(x_{-i})$ is convex.
	
	Clearly, $F_i(x_{-i})$ is compact for every $x_{-i}\in X_{-i}$. Thus, applying Theorem \ref{theorem1} for these chosen $F_i,u_i$, we see that the correspondence $M_i$ fulfills the upper semi-continuity property and the set $M_i(x_{-i})$ becomes non-empty and compact.
	
	Therefore, it appears from \cite[Theorem 17.28]{aliprantis} that the map $M:X\rightarrow 2^Y$ formed as $M(x)=\prod_{i\in I} M_i(x_{-i})$ meets the upper semi-continuity property and the set $M(x)$ becomes non-empty, compact and convex.
	
	Construct the best approximation map $Pr_i:Y_i\rightarrow 2^{X_i}$ for every $i\in I$ as follows,
	$$Pr_i(y_i)= \{\bar x_i\in X_i|\, p_i(y_i-\bar x_i)=  \displaystyle \inf_{x_i\in X_i} p_i(y_i-x_i)\}.$$
	Then according to Lemma \ref{lemma1}, it occurs $Pr_i$ meets the upper semi-continuity property and the set $Pr_i(y_i)$ becomes non-empty, convex and compact. Consequently, by virtue of \cite[Theorem 17.28]{aliprantis}, the map $Pr:Y\rightarrow 2^X$ formed as $Pr(y)= \prod_{i\in I} Pr_i(y_i)$ becomes upper semi-continuous and the set $Pr(y)$ is non-empty, convex and compact.
	
	Let us define the map $\Gamma=Pr\circ M:X\rightarrow 2^X$. We aim to show that the set $\overline{\Gamma(X)}$ eventually becomes compact. Due to the fact that $\overline{F_i(X_{-i})}$ is compact set for every $i\in I$ and $\overline{M(X)}\subset \prod_{i\in I} \overline{F_i(X_{-i})}$, we conclude that $\overline{M(X)}$ is compact.
	Therefore $Pr(\overline{M(X)})$ is compact as $Pr$ is compact valued meeting the upper semi-continuity property.
	
	
	Therefore we see that the map $\Gamma=Pr\circ M$ meets all the required conditions of Theorem \ref{theorem2}. Therefore an element $\tilde{x}\in X$ occurs with $\tilde{x}\in \Gamma(\tilde{x})= Pr\circ M(\tilde{x})$, according to Theorem \ref{theorem2}. We conclude that this $\tilde{x}\in X$ is eventually a best approximate solution for the given generalized Nash game. Since $\tilde{x} \in Pr \circ M(\tilde x)$, there exists $\tilde{y} \in M(\tilde{x})$ with $\tilde{x} \in Pr(\tilde{y}$).
	Thus  $\tilde{x}=(\tilde x_i)_{i \in I}$ lies in $X$ and $\tilde{y}=(\tilde y_i)_{i \in I}$ lies in $Y$ satisfying the following for any $ i$:
	$$p_i(\tilde y_i-\tilde x_i)= \displaystyle \inf_{x_i\in X_i} p_i(\tilde y_i-x_i), ~\textrm{and}$$
	$$u_i(\tilde{x}_{-i},\tilde y_i) \geq u_i(\tilde{x}_{-i},y_i),~ \textrm{for~any}~y_i \in F_i(\tilde{x}_{-i}).$$
\end{proof}
\begin{remark}\leavevmode
	\begin{itemize}
		\item[(i)] We obtain Theorem \ref{theorem6} due to Tian and Zhou \cite{tian} as a corollary of Theorem \ref{theorem3}, on considering the product of feasible strategy maps, $F=\prod_{i\in I} F_i$ to be self-map, that is, $F_i(X_{-i})\subseteq X_i$ for each $i\in I$.
		\item[(ii)]	In Theorem \ref{theorem3} for any $i\in I$, consider $u_i$ to be continuous and assumptions over $F_i$ are intact. If we restrict each $X_i \neq \emptyset$ to be convex, compact subset of $\mathbb{R}^{n_i}$ and $F_i(X_{-i})\subseteq X_i$ for each $i\in I$ (that is $F$ is a self map). Then we obtain the result equivalent to the existence result \cite[Theorem 4.3.1]{ichiishi}) due to Ichiishi.
		\item [(iii)] We proved the existence of best approximate solution for $GNEP(F_i,u_i)_{i\in I}$ considering quasi-concavity and weak continuity (feasible path transfer lower semi-continuity) of objective functions which are obviously preferable over the convex differentiable objective function as considered in \cite{aussel2016} to prove the existence of projected solution.
	\end{itemize}
\end{remark}
Following existence result for $GNEP(F_i,u_i)_{i\in I}$ defined over infinite dimensional uniformly convex Banach spaces having unbounded strategy sets can be obtained by combining Theorem \ref{theorem3} and Example \ref{ex2},
\begin{corollary}\label{coro2}
	Suppose for any $i\in I$, $Y_i$ is uniformly convex Banach space and $ X_i\subseteq Y_i $ is non-empty, closed and convex. If for each $i\in I$,
	\begin{enumerate}
		\item[a)]$u_i:Y_{-i}\times Y_i \rightarrow \mathbb{R}$ is upper semi-continuous real valued function which is quasi-concave in the variable $y_{i}$;
		\item[b)]$F_i:X_{-i}\rightarrow 2^{Y_{i}}$ is non-empty, closed and convex valued upper semi-continuous map where the set $\overline{F_i(X_{-i})}$ is compact;
		\item[c)]$u_i$ is FPT l.s.c. in $x_{-i}$ refer to $F_{i}$.
	\end{enumerate}
	Then $GNEP(F_i,u_i)_{i\in I}$ admits atleast one best approximate solution.
\end{corollary}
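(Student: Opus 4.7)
The plan is to deduce the corollary directly from Theorem \ref{theorem3} by verifying that the hypotheses of that theorem are satisfied in the uniformly convex Banach space setting. The only new ingredient compared with Theorem \ref{theorem3} is that the strategy sets $X_i$ are only assumed to be non-empty, closed and convex (and possibly unbounded); they are not assumed to be approximatively compact. So the crux of the argument is to upgrade this hypothesis.

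First I would observe that every uniformly convex Banach space $Y_i$, endowed with the norm topology, is a locally convex Hausdorff topological vector space, and the norm $\|\cdot\|_i$ itself is a continuous semi-norm on $Y_i$. Thus, setting $p_i=\|\cdot\|_i$, the ambient framework of Theorem \ref{theorem3} is in place.

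Next, I would invoke Example \ref{ex2}: in a uniformly convex Banach space, every non-empty, closed and convex subset is approximatively compact with respect to the norm. Applying this to each $X_i\subseteq Y_i$, I conclude that every $X_i$ is approximatively compact with respect to $p_i$. Combined with the assumptions (a), (b), (c) that are imposed verbatim in the corollary, this means that all the hypotheses of Theorem \ref{theorem3} are now fulfilled.

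Finally, I would apply Theorem \ref{theorem3} to obtain a best approximate solution $\tilde x\in X$ of $GNEP(F_i,u_i)_{i\in I}$, completing the proof. There is no substantial obstacle here: the only non-routine step is the appeal to the classical best approximation property in uniformly convex Banach spaces recorded as Example \ref{ex2}, and the rest is a direct invocation of the main theorem.
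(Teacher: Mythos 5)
Your proposal is correct and follows exactly the route the paper intends: the corollary is obtained by taking $p_i=\|\cdot\|_i$, using Example \ref{ex2} to see that each non-empty closed convex $X_i$ is approximatively compact in the uniformly convex Banach space $Y_i$, and then applying Theorem \ref{theorem3}. No issues.
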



\subsection{Existence of Best Approximate Solutions for Quasi-Optimization Problems}\label{section4.2}
This section consists of the existence result concerned with best approximate solutions for quasi-optimization problems. The quasi-optimization problem is generally accompanied by a constraint map which we consider to be a non-self map in the present work.

Assume that $U$ is a non-empty subset of locally convex Hausdorff t.v.s. $V$. Suppose $f:U\rightarrow \mathbb{R}$ is a real valued function and $K:U\rightarrow 2^U$ is a constraint map. For the considered $f$ and $K$, the  quasi-optimization problem (see \cite{Qopt}) is to obtain $\bar u\in U$ satisfying the following,
$$ \bar u\in K(\bar u) \text{ and } f(\bar u)=\displaystyle\max_{w\in K(\bar u)}f(w).$$

Some of the existence results related to solution of quasi-optimization problems can be found under \cite{ausselopt,cotrina}. The recent advances includes the occurrence of projected solution for quasi-optimization problem defined over finite dimensional space (refer \cite{aussel2016}). We have extended this concept to the best approximate solution for quasi-optimization problem defined over locally convex Hausdorff t.v.s.

\begin{definition}
	Let $V$ be locally convex Hausdorff t.v.s. with continuous semi-norm $p$ and $U$ be non-empty subset of $V$. For real valued objective function $f:V\rightarrow\mathbb{R}$ and non-self constraint map $K:U\rightarrow 2^V$, an element $\bar u\in U$ is said to be best approximate solution for quasi-optimization problem $QuOp(K,f)$ iff there exists $\overline{v}\in V$ satisfying,
	
	$$p(\overline v-\overline u)=\displaystyle \inf_{u\in U} p(\overline v- u) ~\textrm{and}~\ f(\overline v)=\displaystyle\max_{w\in K(\overline u)} f(w).$$
\end{definition}

Clearly, every (classical) solution $\overline u\in U$ is a best approximate solution for the considered quasi-optimization problem $QuOp(K,f)$, as we may assume $\overline v=\overline u$ in above definition. However, the existence of (classical) solution is only possible if $K(U)\cap U\neq \emptyset$. On the other hand, every best approximate solution is a (classical) solution for the given quasi-optimization problem if $K(U)\subseteq U$.

\begin{remark}
	Note that if the quasi-optimization problem $QuOp(K,f)$ is defined over Euclidean space then the concept of best approximate solution coincides with the concept of projected solution defined in \cite[section 4.1]{aussel2016}.
\end{remark}
Below stated rephrased version of \textit{Berge's Maximum Theorem} \cite[Theorem 2.3.1]{ichiishi} will  play an indispensable role in order to establish the existence of best approximate solution for $QuOp(K,f)$.
\begin{theorem}\label{theorem4} For given topological spaces U and V suppose $f:V\rightarrow \mathbb{R}$ is a continuous function; $K:U\rightarrow 2^V$ is a continuous map with non-empty compact values. Then the maximizing map $M:U\rightarrow 2^V$ defined as,
	$$M(u)=\{v\in K(u): f(v)\geq f(w)~\text{for any}~w\in K(u)\}$$ meets the upper semi-continuity property with non-empty compact values.
\end{theorem}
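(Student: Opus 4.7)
My plan is to establish the three conclusions in turn: non-emptiness of $M(u)$, compactness of $M(u)$, and finally upper semi-continuity of $M$. For the first two, I would fix $u \in U$ and observe that since $K(u)$ is non-empty and compact and $f$ is continuous, the restriction $f|_{K(u)}$ attains its supremum $m := \max_{w \in K(u)} f(w)$, so $M(u)$ is non-empty. Writing $M(u) = K(u) \cap f^{-1}([m,\infty))$ exhibits $M(u)$ as a closed subset of the compact set $K(u)$, so $M(u)$ is compact.

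For the upper semi-continuity of $M$, I would use the net characterization that is available for maps whose values sit inside compact subsets of a Hausdorff target: it suffices to show that whenever $u_\alpha \to u$ in $U$ and $v_\alpha \in M(u_\alpha)$, the net $(v_\alpha)$ has a cluster point lying in $M(u)$. The upper semi-continuity of $K$ combined with compactness of its values guarantees a subnet $v_\beta \to v$ with $v \in K(u)$. It then remains to check that $f(v) = \max_{w \in K(u)} f(w)$, i.e.\ that $v$ actually lies in $M(u)$. For this I would invoke the lower semi-continuity of $K$: for an arbitrary $w \in K(u)$, one can extract, along a suitable refinement of the directed set indexing the subnet $(v_\beta)$, a net $w_\gamma \in K(u_\gamma)$ with $w_\gamma \to w$. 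Since $v_\gamma \in M(u_\gamma)$ forces $f(v_\gamma) \geq f(w_\gamma)$, continuity of $f$ lets me pass to the limit and conclude $f(v) \geq f(w)$. As $w \in K(u)$ was arbitrary, $v \in M(u)$, completing the argument.

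I expect the main obstacle, and the step deserving the most care, to be the indexing gymnastics at the end: one has to arrange that the two approximating nets $(v_\gamma)$ and $(w_\gamma)$ can be compared along a common cofinal subset so that the inequalities $f(v_\gamma) \geq f(w_\gamma)$ can be simultaneously transported to the limits $v$ and $w$. This is a standard manoeuvre in Berge-type arguments and causes no substantive difficulty in the present topological-space setting, so the result follows; alternatively, as the statement is only a reformulation of a classical maximum theorem, one could simply cite \cite[Theorem 2.3.1]{ichiishi} without reproducing the argument.
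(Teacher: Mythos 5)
Your argument is correct, but note that the paper itself supplies no proof of this statement: it is presented as a ``rephrased version of Berge's Maximum Theorem'' and simply cited as \cite[Theorem 2.3.1]{ichiishi}, which is exactly the fallback you mention in your last sentence. What you have written out is the standard proof of Berge's theorem: Weierstrass for non-emptiness, $M(u)=K(u)\cap f^{-1}([m,\infty))$ for compactness, and the two net arguments (a convergent subnet from upper semi-continuity plus compact values of $K$, an approximating net $w_\gamma\to w$ from lower semi-continuity of $K$) to pass the inequality $f(v_\gamma)\ge f(w_\gamma)$ to the limit. The one point worth a remark is your appeal to the cluster-point criterion for upper semi-continuity ``for maps whose values sit inside compact subsets of a Hausdorff target'': the statement places no Hausdorff hypothesis on $V$, but the direction you actually use (existence of a cluster point of $(v_\alpha)$ inside $M(u)$ implies upper semi-continuity at $u$, and the extraction of a subnet of $(v_\alpha)$ converging into the compact set $K(u)$ from upper semi-continuity of $K$) holds in arbitrary topological spaces, so no hypothesis is missing. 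The ``indexing gymnastics'' you flag are indeed the only delicate point and are handled exactly as in the classical references, so the proposal is sound.
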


An existence result which is stated below ensures the presence of the best approximate solution for a quasi-optimization problem $QuOp(K,f)$ under the given sufficient conditions.
\begin{theorem}\label{theorem5}
	Assume $V$ is a locally convex Hausdorff t.v.s. with continuous semi-norm $p$ and $U\subseteq V$ is non-empty, convex and approximatively compact (refer to p). Suppose,
	\begin{itemize}
		\item[a)] $f:V\rightarrow \mathbb{R}$ is a continuous, quasi-concave function;
		\item[b)] $K:U\rightarrow2^V$ is non-empty, closed and convex valued continuous mapping where $\overline{K(U)}$ compact.
	\end{itemize}
	Then the quasi-optimization problem $QuOp(K,f)$ admits atleast one best approximate solution.
\end{theorem}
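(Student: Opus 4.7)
The plan is to mirror the structure of the proof of Theorem \ref{theorem3}, but in the simpler single-agent setting, by composing a maximizing map with a best approximation map and applying Lassonde's factorizable Kakutani-type fixed point theorem (Theorem \ref{theorem2}).

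First, I would introduce the maximizing map $M:U\rightarrow 2^V$ defined by
$$M(u)=\{v\in K(u):\, f(v)\geq f(w)\ \text{for every}\ w\in K(u)\}.$$
Because $f$ is continuous and $K$ is continuous with non-empty compact values, Theorem \ref{theorem4} (classical Berge maximum theorem) yields that $M$ is upper semi-continuous with non-empty compact values. I would then verify that $M$ is convex-valued: given $v_1,v_2\in M(u)$ and $t\in[0,1]$, convexity of $K(u)$ gives $tv_1+(1-t)v_2\in K(u)$, and quasi-concavity of $f$ with $f(v_1)=f(v_2)=\max_{w\in K(u)}f(w)$ forces $f(tv_1+(1-t)v_2)\geq \max_{w\in K(u)}f(w)$, so the convex combination lies in $M(u)$.

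Next, I would introduce the best approximation map $Pr:V\rightarrow 2^U$ defined by
$$Pr(v)=\bigl\{\bar u\in U:\, p(v-\bar u)=\inf_{u\in U}p(v-u)\bigr\}.$$
Since $U$ is non-empty, convex and approximatively compact with respect to $p$, Lemma \ref{lemma1} implies that $Pr$ is upper semi-continuous with non-empty, convex, compact values. I would then consider the composition $\Gamma=Pr\circ M:U\rightarrow 2^U$, which factors as $U\xrightarrow{M}V\xrightarrow{Pr}U$, matching exactly the factorization hypothesis of Theorem \ref{theorem2}.

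To apply Theorem \ref{theorem2}, I still need $\overline{\Gamma(U)}$ to be compact. Since $M(U)\subseteq K(U)$ and $\overline{K(U)}$ is compact, $\overline{M(U)}$ is compact; then $Pr(\overline{M(U)})$ is compact because $Pr$ is upper semi-continuous with compact values, and so $\overline{\Gamma(U)}\subseteq Pr(\overline{M(U)})$ is compact. Theorem \ref{theorem2} then yields $\bar u\in U$ with $\bar u\in Pr(M(\bar u))$, i.e.\ there exists $\bar v\in M(\bar u)$ with $\bar u\in Pr(\bar v)$, which gives exactly the two defining properties of a best approximate solution.

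I do not expect any major obstacle beyond what already appears in the proof of Theorem \ref{theorem3}; the main subtlety is the verification of convex-valuedness of $M$ from the quasi-concavity of $f$ (since the classical Berge theorem cited only delivers compactness), together with the careful compactness argument for $\overline{\Gamma(U)}$ that justifies invoking Theorem \ref{theorem2}.
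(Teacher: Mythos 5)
Your proposal is correct and follows essentially the same route as the paper's own proof: the same maximizing map $M$ handled via Theorem \ref{theorem4}, the same convexity argument from quasi-concavity, the same best approximation map $Pr$ via Lemma \ref{lemma1}, the same compactness argument for $\overline{\Gamma(U)}$, and the same appeal to Theorem \ref{theorem2}. No gaps to report.
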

\begin{proof}
	Let us construct the maximizing map, $M:U\rightarrow 2^V$ as follows,
	$$M(u)=\{v\in K(u)|\,f(v)\geq f(w) \ \text{for any } w\in K(u)\}.$$
	Clearly $K(u)$ is compact for every $u\in U$. Thus applying Theorem \ref{theorem4} for the chosen mappings $K,f$, we can observe that the corresponding mapping $M$ fulfills the upper semi-continuity property and the set $M(u)$ becomes non-empty and compact.
	
	Following the same argument as presented in proof of Theorem \ref{theorem3}, the set $M(u)$ is convex for any $u\in U$. Indeed if $v,w\in M(u)$ and $s\in [0,1]$ are chosen arbitrarily then by convexity of $K(u)$ and quasi-concavity of $f$ it is evident that, $sx+(1-s)y\in K(u)$ and
	$$f(sv+(1-s)w)\geq \min \{f(v),f(w)\}\geq f(z) $$ for any $z\in K(u)$.
	
	Consider the best approximation map $Pr:V\rightarrow 2^U$ defined as,
	$$Pr(v)=\{\bar u\in U:p(v-\bar u)=\displaystyle \inf_{u\in U}p(v-u)\}.$$
	Then by the virtue of Lemma \ref{lemma1}, the map $Pr$ meets the upper semi-continuity property and the set $Pr(v)$ becomes non-empty, compact and convex.
	
	Now we claim that for the defined map $\Gamma= Pr\circ M:U\rightarrow 2^U$, the set $\overline{\Gamma (U)}$ is compact. It is noticeable that $\overline{M(U)}\subset \overline{K(U)}$ is compact. Thus the set $\overline{\Gamma(U)}\subseteq Pr(\overline{M(U)})$ is compact as $Pr$ is compact valued map meeting the upper semi-continuity property.
	
	It can be observed that the map $\Gamma=Pr\circ M$ fulfills the required conditions of Theorem \ref{theorem2} which leads us to the existence of $\overline u\in U$ such that, $\overline u\in Pr\circ M(\overline u)$. We assert that $\overline u$ is best approximate solution for the proposed optimization problem $QuOp(K,f)$. Since $\overline u\in Pr\circ M(\overline u)$, there exists $\overline v\in M(\overline u)$ with $\overline u\in Pr(\overline v)$. Thus $\overline u\in U$ and $\overline v\in V$ fulfills the following condition,
	$$p(\overline v-\overline u)=\displaystyle \inf_{u\in U}p(\overline v-u)~\text{and}~
	f(\overline v)=\displaystyle\max_{w\in K(\overline u)} f(w).$$
\end{proof}
Following existence result for quasi-optimization problem defined over uniformly convex Banach spaces can be obtained on combining Theorem \ref{theorem5} and Example \ref{ex2},
\begin{corollary}\label{coro4}
	Assume that $U$ is a non-empty, closed and convex subset of uniformly convex Banach space $V$. Suppose,
	\begin{enumerate}
		\item [a)] $f:V\rightarrow \mathbb{R}$ is continuous and quasi-concave function;
		\item [b)] $K:U\rightarrow 2^V$ is non-empty, closed and convex valued upper semi-continuous mapping where $\overline{K(U)}$ is compact.
	\end{enumerate} Then there exists a best approximate solution for $QuOp(K,f)$.
\end{corollary}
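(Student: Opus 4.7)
The plan is to derive Corollary \ref{coro4} as an immediate consequence of Theorem \ref{theorem5}, with Example \ref{ex2} supplying the only hypothesis that is not written verbatim in the corollary's statement. Since the role of the corollary is to translate the general topological vector space result into the concrete Banach space setting, the argument should be short and essentially structural, with no new machinery required.

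First, I would observe that a uniformly convex Banach space $V$ is automatically a locally convex Hausdorff topological vector space, and its norm $\|\cdot\|$ serves as a continuous semi-norm $p$ on $V$. Thus the ambient setting required by Theorem \ref{theorem5} is met, and the distance $d_p(\cdot,U)$ in that theorem simply becomes the usual norm-distance to $U$.

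Next, I would invoke Example \ref{ex2}: since $U$ is a non-empty, closed and convex subset of the uniformly convex Banach space $V$, that example guarantees $U$ is approximatively compact with respect to $p=\|\cdot\|$. Combined with non-emptiness and convexity, this completes the list of hypotheses that Theorem \ref{theorem5} imposes on $U$. The assumptions on $f$ (continuous and quasi-concave) are identical to those in Theorem \ref{theorem5}, and the assumptions on the constraint map $K$ (non-empty, closed and convex values, with $\overline{K(U)}$ compact) transfer directly. Applying Theorem \ref{theorem5} then yields a best approximate solution for $QuOp(K,f)$.

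The only point that requires attention is a minor hypothesis mismatch: Theorem \ref{theorem5} invokes Berge's maximum theorem \ref{theorem4} and therefore asks $K$ to be continuous, whereas Corollary \ref{coro4} only assumes $K$ to be upper semi-continuous. I would treat this as the one step worth pausing on: either I would read the corollary as implicitly inheriting the full continuity demanded in Theorem \ref{theorem5} (the natural reading, since the corollary is stated as a direct specialization), or I would have to revisit the maximization-plus-projection construction to check whether upper semi-continuity alone suffices. Apart from this small verification, the proof is a routine specialization and no genuine obstacle is expected.
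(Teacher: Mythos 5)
Your proposal is correct and matches the paper exactly: the paper gives no separate proof of Corollary~\ref{coro4}, stating only that it follows by combining Theorem~\ref{theorem5} with Example~\ref{ex2}, which is precisely your argument. Your observation about the hypothesis mismatch is also well taken --- Theorem~\ref{theorem5} (via Berge's maximum theorem, Theorem~\ref{theorem4}) requires $K$ to be continuous, not merely upper semi-continuous, so the corollary as printed should be read as assuming full continuity of $K$ for the specialization to go through.
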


\begin{remark}
	We know that the best approximate solution for $QuOp(K,f)$ is a (classical) solution if the constraint map $K$ is a self map. Consequently, for some convex and compact set $U\subseteq \mathbb{R}^n$, if $K$ is considered to be a self constraint map, then \cite[Proposition 4.5]{ausselopt} follows from Theorem \ref{theorem5}.
\end{remark}

Moreover, the following result related to existence of classical solution for $QuOp(K,f)$ is direct consequence of Theorem \ref{theorem5}.
\begin{corollary}\label{coro3} \cite[Corollary 3.2]{cotrina}
	Assume that $U$ is a convex compact subset of some locally convex Hausdorff t.v.s. Suppose $f:U\rightarrow\mathbb{R}$ is a continuous, quasi-convex function; $K:U\rightarrow 2^U$ is lower semi-continuous map with convex values and closed graph. Then there exists $\overline{u}\in U$ satisfying,
	$$\overline u \in K(\overline u)~\textrm{and}~f(\overline u)=\displaystyle\min_{w\in U} f(w).$$
\end{corollary}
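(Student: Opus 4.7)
A caveat on the statement first: the displayed conclusion asks $\overline u$ to be simultaneously a fixed point of $K$ and a global minimiser of $f$ on all of $U$. The example $U=[0,1]$, $f(u)=u$, $K(u)\equiv\{1\}$ satisfies every listed hypothesis (continuous convex $f$, constant self map $K$ with closed graph and convex values) yet has its only fixed point at $1$, far from $\arg\min_U f=\{0\}$, so the conclusion is false as written. I read ``$\min_{w\in U}$'' as a misprint for $\min_{w\in K(\overline u)}$, in line with the cited \cite[Corollary 3.2]{cotrina} and with the corollary's advertised role as a direct consequence of Theorem \ref{theorem5}; the plan below delivers this intended classical $QuOp(K,f)$ conclusion.

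The plan is to run the proof of Theorem \ref{theorem5} in its $\min$-specialisation, which bypasses the projection argument because $K$ is already a self map. First I would upgrade the assumptions on $K$: with $U$ compact Hausdorff and the graph of $K$ closed in $U\times U$, the standard closed-graph criterion promotes $K$ to upper semi-continuous with compact values, so combined with the given lower semi-continuity, $K$ is continuous with nonempty, compact, convex values. Then I would form the argmin map
$$
M(u)=\{v\in K(u): f(v)\le f(w)\ \text{for every}\ w\in K(u)\},
$$
apply Berge's maximum theorem (Theorem \ref{theorem4} to $-f$, which is continuous and quasi-concave) to obtain that $M$ is upper semi-continuous with nonempty compact values, and use quasi-convexity of $f$ together with convexity of $K(u)$ to see each $M(u)$ is convex, mirroring the $M_i$ step inside the proof of Theorem \ref{theorem3}.

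The final step is the fixed-point argument: $M:U\to 2^U$ is an upper semi-continuous self map with nonempty convex compact values on the convex compact set $U$ inside a locally convex Hausdorff t.v.s., so Theorem \ref{theorem2} with $N=0$ (with $\overline{M(U)}\subseteq U$ automatically compact) applies and produces $\overline u\in M(\overline u)$; that is, $\overline u\in K(\overline u)$ with $f(\overline u)=\min_{w\in K(\overline u)}f(w)$.

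The main obstacle is precisely the mismatch flagged at the outset: no argument built from Berge plus a Kakutani-type fixed point can force the produced fixed point to minimise $f$ over all of $U$, and the literal ``$\min_{w\in U}$'' version would require an extra hypothesis such as $K(u)\cap \arg\min_U f\neq\emptyset$ for every $u\in U$ (or a monotone-selection property relating $K$ to the level sets of $f$). Modulo the reinterpretation, the proof is the routine Berge-plus-Kakutani chain described above.
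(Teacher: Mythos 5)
Your reading of the displayed conclusion as a misprint is correct and well justified: with $U=[0,1]$, $f(u)=u$, $K(u)\equiv\{1\}$ all hypotheses hold yet the unique fixed point is not a global minimiser, so ``$\min_{w\in U}$'' must be ``$\min_{w\in K(\overline u)}$'', which is also what the cited source and the paper's definition of $QuOp(K,f)$ require. For the corrected statement your argument is correct and is essentially the route the paper intends (it offers no written proof, only the assertion that the corollary follows from Theorem \ref{theorem5}): compactness of $U$ plus the closed graph upgrade $K$ to a continuous, compact-convex-valued self map, Theorem \ref{theorem4} applied to $-f$ gives the upper semi-continuous argmin map with convex values, and Theorem \ref{theorem2} supplies the fixed point; since $K$ is a self map the projection step of Theorem \ref{theorem5} is vacuous, exactly as you note.
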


\section*{Acknowledgment}
The second author is grateful to University Grants Commission (UGC), New Delhi, India for providing the financial aid to carry out this research work under the enrollment number $\big(\rm{1313/(CSIRNETJUNE2019)}\big)$.


\begin{thebibliography}{99}
	\bibitem{aliprantis} C.D. Aliprantis and K.C. Border, \textit{Infinite Dimensional Analysis: A Hitchhiker's Guide}, Springer, Berlin, 2006.
	%
	\bibitem{debreu} K.J. Arrow and G. Debreu, Existence of an equilibrium for a competitive economy, \textit{Econometrica} 22 (1954) 265--290.
	%
	\bibitem{ausselopt} D. Aussel and J. Cotrina, Quasimonotone quasivariational inequalities: existence results and applications, \textit{J. Optim. Theory Appl.} 158 (2013) 637--652.
	%
	\bibitem{ausselTDGNEP} D. Aussel, R. Gupta and A. Mehra, Evolutionary variational inequality formulation of the generalized Nash equilibrium problem, \textit{J. Optim. Theory Appl.} 169 (2016) 74--90.
	%
	\bibitem{aussel2016} D. Aussel, A. Sultana and V. Vetrivel, On the existence of projected solutions of quasi-variational inequalities and generalized Nash equilibrium problems, \textit{J. Optim. Theory Appl.} 170 (2016) 818--837.
	%
	\bibitem{berge} C. Berge, \textit{Topological Spaces: Including a treatment of multi-valued functions, vector spaces and convexity}, Oliver and Boyd, Edinburgh and London, 1963.
	%
	\bibitem{vetrivel} P. Bhattacharyya and V. Vetrivel, An existence theorem on generalized quasi-variational inequality problem, \textit{J. Math. Anal. Appl.} 188 (1994) 610--615.
	%
	\bibitem{cotrinaGNEP} O. Bueno and J. Cotrina, Existence of projected solutions for generalized Nash equilibrium problems, \textit{J. Optim. Theory Appl.} 191 (2021) 344--362.
	%
	\bibitem{cotrina} J. Cotrina and J. Z{\'u}{\~n}iga, A note on quasi-equilibrium problems, \textit{Oper. Res. Lett.} 46 (2018) 138--140.
	%
	\bibitem{debreu1952} G. Debreu, A social equilibrium existence theorem, \textit{Proc. Nat. Acad. Sci.} 38 (1952) 886--893.
	%
	\bibitem{faccheini} F. Facchinei and C. Kanzow, Generalized Nash equilibrium problems, \textit{Ann. Oper. Res.} 175 (2010) 177--211.
	%
	\bibitem{fischer} A. Fischer, M. Herrich and K. Sch{\"o}nefeld, Generalized Nash equilibrium problems-recent advances and challenges, \textit{Pesquisa Operacional} 34 (2014) 521-558.
	%
	\bibitem{Qopt} F. Giannessi, G. Mastroeni and L. Pellegrini, \textit{Vector Variational Inequalities and Vector Equilibria}, Springer, Boston, 2000.
	%
	\bibitem{ichiishi} T. Ichiishi, \textit{Game Theory for Economic Analysis}, Academic Press, New York, 1983.
	%
	\bibitem{lassonde} M. Lassonde, Fixed points for Kakutani factorizable multifunctions, \textit{J. Math. Anal. Appl.} 152 (1990) 46--60.
	%
	\bibitem{nash1} J. Nash, Equilibrium points in n-person games, \textit{Proc. Nat. Acad. Sci.} 36 (1950) 48--49.
	%
	\bibitem{ramos} M. Ramos, M. Bo\c{i}x, D. Aussel, L. Montastruc and S. Domenech, Water integration in eco-industrial parks using a multi-leader-follower approach, \textit{Comp. Chem. Engg.} 87 (2016) 190--207.
	%
	\bibitem{singh} V.M. Sehgal and S.P. Singh, A generalization to multifunctions of Fan's best approximation theorem, \textit{Proc. Amer. Math. Soc.} 102 (1998) 534--537.
	%
	\bibitem{tian} G. Tian and J. Zhou, The maximum theorem and the existence of Nash equilibrium of (generalized) games without lower semi-continuities, \textit{J. Math. Anal. Appl.} 166 (1992) 351--364.
	%
	\bibitem{yannels} N. C. Yannelis and N. D. Prabhakar, Existence of maximal elements and equilibria in linear topological spaces, \textit{J. Math. Econ.} 12 (1983) 223--245.

\end{thebibliography}
\end{document}